\newcommand{\verti}[1]{{\left\vert\kern-0.25ex\left\vert\kern-0.25ex\left\vert #1
    \right\vert\kern-0.25ex\right\vert\kern-0.25ex\right\vert}}
    \newcommand{\anglei}[1]{{\langle\kern-0.25ex\langle #1
    \rangle\kern-0.25ex\rangle}}
\newtheorem{theorem}{Theorem}
\newtheorem{proposition}{Proposition}
\newtheorem{remark}{Remark}
\newtheorem{example}{Example}
\newtheorem{assumption}{Assumption}
\numberwithin{equation}{section}
\theoremstyle{plain}
\begin{document}

\begin{frontmatter}

\title{{\large Detecting Whether a Stochastic Process is Finitely Expressed  in a Basis}}


\runtitle{runtitle }

\begin{aug}
\author{\fnms{Neda} \snm{Mohammadi}\ead[label=e1]{neda.mohammadijouzdani@epfl.ch}} \and
\author{\fnms{Victor M.} \snm{Panaretos}\ead[label=e2]{victor.panaretos@epfl.ch}}


\runauthor{A.B. \& C.D. }

\affiliation{\'Ecole Polytechnique F\'ed\'erale de Lausanne}

\address{Institut de Math\'ematiques\\ 
\'Ecole Polytechnique F\'ed\'erale de Lausanne\\}

\end{aug}

\begin{abstract} 
Is it possible to detect if the sample paths of a stochastic process almost surely admit a finite expansion with respect to some/any basis? The determination is to be made on the basis of a finite collection of discretely/noisily observed sample paths. We show that it is indeed possible to construct a hypothesis testing scheme that is almost surely guaranteed to make only finitely many incorrect decisions as more data are collected. Said differently, our scheme almost certainly detects whether the process has a finite or infinite basis expansion for all sufficiently large sample sizes. Our approach relies on Cover's classical test for the irrationality of a mean, combined with tools for the non-parametric estimation of covariance operators. 
\end{abstract}

\begin{keyword}[class=AMS]
\kwd[Primary ]{60G35, 62G10, 62M07}
\kwd[; secondary ]{94A13}
\end{keyword}

\begin{keyword}
\kwd{covariance operator}
\kwd{hypothesis testing}
\end{keyword}

\end{frontmatter}

\tableofcontents

\section{Introduction} \label{intro}

Let $\{X(t):t\in[0,1]\}$ be a second-order stochastic process, with almost surely continuous sample paths. We wish to discern whether there exists a deterministic Complete Orthonormal System (CONS) $\{\psi_l\}$ of $L^2[0,1]$ and a finite constant $L<\infty$ such that
$$X(t)=\sum_{l=1}^{L}\langle X,\psi_l\rangle \psi_l(t),\qquad \mbox{almost surely},$$
where $\langle\cdot,\cdot\rangle$ is the $L^2[0,1]$ inner product and equality is almost everywhere; i.e. we wish to discern whether the realisations of the stochastic process are entirely contained in some finite dimensional subspace of $L^2$.  Alternatively, we might wish to discern  whether the displayed equality holds true for a \emph{specific} CONS and some $L<\infty$, i.e. whether the paths of the process a.s. admit a finite expansion in a given basis. 

Of course, if we assume that $X(t)$ can be observed in full, then both questions become moot: as soon as we observe $n>L$ realisations we can know with (almost) certainty whether $X$ lies in a finite dimensional subspace of dimension $L$; and, even with a single realisation, we can a.s. know whether $\langle X,\psi_l\rangle$ vanishes for all but finitely many elements of a given CONS $\{\psi_l\}$. However, we are interested in a setting where we can only observe finitely many noisy point evaluations on each of $n$ independently realised sample paths of $X$. In this case, the answer to either question becomes far from obvious, since $\langle X,\psi_l\rangle$ needs to be estimated.

Such questions are very natural from a theoretical point of view. They ask whether an a priori infinite dimensional signal is, in fact, finite dimensional. And, they ask whether it's possible  to detect a ``tail property" on the basis of finite and corrupted data. At the same time, such questions arise quite naturally in functional data analysis (see e.g. \citet{hsing2015theoretical}, \citet{wang2016functional}), where methodology to make inferences on stochastic processes typically relies heavily on dimension reduction.

In this paper, we construct a testing method that is proven to err only finitely often. In this sense, it is able to detect whether the process can be expressed finitely in some/any CONS. Our method is inspired by Cover's \citep{cover1973determining} approach to determining whether the mean of a sequence of real-valued random variables is rational or irrational. We suitably adapt it to our needs by translating it to a question on the finiteness of the spectral decomposition of the process' covariance operator. \citet{spruill_determining_1977} asked a related question, also adopting Cover's perspective, namely whether the \emph{mean} of a stochastic process admits a finite expansion. 

Our setting and contributions are distinct in important ways. First, we are concerned with the almost sure representation of the process itself, rather than its mean. This not only has immediate statistical applications, but also induces notable differences, conceptual and technical, as we focus on the covariance operator (fluctuations around the mean) rather than the mean function itself. Second, and perhaps more important, we assume discrete/noisy observation of sample paths, rather than perfect observation\footnote{It should be noted here that perfect observation \emph{does not} trivialize the problem at the level of mean function, as it does in our case; but certainly discrete/noisy observation complicates it substantially, in either case.}. Finally, a mean  function can always be finitely represented in the CONS whose first element is the mean itself. Therefore, the question we ask on whether the sample paths can be finitely expanded in \emph{some} basis (rather than just a specific basis) is genuinely novel.

Our work is also distinct from previous testing methodology developed in the context of functional data analysis (\citet{kneip_nonparametric_1994}, \citet{hall_assessing_2006}, \citet{chakraborty_testing_2020}). These papers focus on whether the process is of dimension at most $L_0$ for some given and fixed $L_0<\infty$. By contrast, we wish to detect whether the process is of some finite dimension without prescribing \emph{which} dimension. Thus, our process will detect \emph{any} finite dimension (not upper bounded by  a priori guess), and will also be able to detect infinite dimensionality (as opposed to detecting that the dimension exceeds a pre-specified upper bound $L_0$).

\section{Basic Definitions and Problem Statement}
Let  $\mathcal{H} = L^2([0,1])$ be the Hilbert space of square integrable real-valued functions on the unit interval,  equipped with the usual inner product $\langle\cdot,\cdot\rangle$ and norm $\|\cdot\|$. 
 The class of Hilbert Schmidt operators and nuclear operators on $\mathcal{H}$  will be denoted by $\mathcal{S}$ and $\mathcal{N}$,  respectively. We use the notation $\Vert \cdot \Vert_{\mathcal{S}}$ and $\Vert \cdot \Vert_{\mathcal{N}}$ to indicate the corresponding norms on $\mathcal{S}$ and $\mathcal{N}$,
 $$\| A \|_{\mathcal{S}} = \sqrt{\mathrm{trace}\{A^* A\}}\qquad \& \qquad  \| A \|_{\mathcal{N}}=\mathrm{trace}\big\{\sqrt{A^*A}\big\},$$
 where $A^*$ denotes the adjoint of a bounded linear operator $A$. The inner product on $\mathcal{S}$ will be denoted by  
 $$ \langle A,B \rangle_{\mathcal{S}}= \mathrm{trace}\{A^* B\}$$
 The subclasses comprised of non-negative operators are indicated by $\mathcal{S}^+$ and $\mathcal{N}^+$. 
 
 Let $\nu$ be a finite non-negative measure on $\mathcal{S}$. This measure will be used as a (possibly improper) prior, and we will see in Theorems \ref{spect:main} and  \ref{main} that our procedure makes the correct decision eventually almost everywhere with respect to this prior measure; this obviously requires $\nu \left(\mathcal{N}^+\right) > 0 $  or even  $\nu \left(\mathcal{N}^+\right) = \nu(\mathcal{S})$.
\\
Now let  $X$ be an $\mathcal{H}$-valued  random element (a measurable mapping from the underlying probability space $(\Omega, \mathcal{F}, \mathbb{P}) $ to Hilbert space $\mathcal{H}$)
with finite second moment, $\mathbb{E}\Vert X \Vert^2 < \infty$. Its mean function and  covariance operator will be denoted by $\mu:=\mathbb{E}\left( X \right)$ and  $C := \mathbb{E}\left( X\otimes X \right)$, respectively, where $(f\otimes g)h=\langle h,g\rangle f$ for $f,g,h\in\mathcal{H}$. In order for our pointwise-sampling scheme to make sense, we will assume that $X$ has almost surely continuous sample paths. Doing so allows us to interpret $X$ both as a random element of $\mathcal{H}$ and as a second-order stochastic process $\{X(t): t\in [0,1]\}$. Consequently, the mean and covariance will also have a point-wise interpretation: $\mu\equiv \mu(t)$ and $C$ being the operator with covariance kernel $\mathbb{E}[(X(s)-\mu(s))(X(t)-\mu(t))]$. We might abuse notation and use $C$ to denote both covariance operator and its kernel $C(s,t)=\mathbb{E}[(X(s)-\mu(s))(X(t)-\mu(t))]$. The covariance operator $C$ is an element of $\mathcal{S}^+$, courtesy of the assumption $\mathbb{E}\|X\|^2<\infty$. It thus admits the spectral decomposition
 \begin{eqnarray}\nonumber
  C = \sum_{l=1}^{\infty}\alpha_l \phi_l \otimes \phi_l,
 \end{eqnarray}
 where  $\left\{ \alpha_l\right\}$  forms a non-increasing sequence of non-negative and summable scalars, the so called sequence of eigenvalues. And, the sequence $\left\{ \phi_l\right\}_l$ forms an orthonormal system in $\mathcal{H}$, called the eigenfunction system, that is extendible to  a complete orthonormal system (CONS).  In this setting, the Karhunen-Lo\`eve expansion allows us to write
 $$X(t)-\mu(t)=\sum_{l\geq 1} \langle X,\phi_l\rangle \phi_l(t),\qquad t\in [0,1],$$
 where the series converges in mean square, uniformly in $t$, courtesy of the a.s. continuity of $\{X(t):t\in[0,1]\}$. By construction, the random variables $\{\langle X,\phi_l\rangle\}$ are zero-mean and uncorrelated and $\mathrm{var}\{\langle X,\phi_l\rangle\}=\alpha_l$. Consequently, detecting whether $X$ is finitely expressed in \emph{some} basis is equivalent to testing whether the spectral decomposition of $C$ is finite or not. 
 
 Therefore, we will formulate the detection problem corresponding to finite expressibility in \emph{some} basis (or lack of it) via the collection of hypotheses:
  \begin{eqnarray}\label{infinite:hyp:spect}
     \left\{H_j:C=\sum_{l=1}^{j}\alpha_l \phi_l \otimes \phi_l,\;\;\; \alpha_j\neq 0,\;\;\; j=1,2,\dots,\;\;\;\;\;\;\; H_{\infty}:C=\sum_{l=1}^{\infty}\alpha_l \phi_l \otimes \phi_l,\;\;\; \alpha_l\neq 0, \;\;\forall l\right\}.
 \end{eqnarray}
On the other hand, if $X$ is finitely expressed in a given CONS $\{\psi_l\}$, then obviously $X(t)-\mu(t)=\sum_{l = 1}^L \langle X,\psi_l\rangle \psi_l(t)$ for some $L<\infty$, and thus  $C=\sum_{l=1}^{L}\sum_{l'=1}^{L}\alpha_{l,l'}\psi_l\otimes \psi_{l'}$. Noting that $\psi_l\otimes \psi_{l'}$ is a CONS of $\mathcal{S}$, we can simplify notation and formulate the problem of detecting finite expressibility in a \emph{given} basis (or lack of it) via the collection of hypotheses
 \begin{eqnarray}\label{infinite:hyp.}
  \left\{   H_j:C=\sum_{l=1}^{j}a_l \Psi_l,\;\;\; a_j\neq 0,\;\;\;\; j=1,2,\dots,\;\;\;\;\;\;\; H_{\infty}:C=\sum_{l=1}^{\infty}a_l \Psi_l,\;\;\; a_l\neq 0 \;\;\mathrm{for\; infinitely\; many}\; l,\right\}
 \end{eqnarray}
 where $\{\Psi_l\}$ is a CONS of $\mathcal{S}$. Note that we do not necessarily assume that $\{\alpha_l\}$ is decreasing.

In either \eqref{infinite:hyp:spect} or \eqref{infinite:hyp.}, detection is to be based on observing independent realisations of the process $X$: given $n$ realisations, we will declare one of the hypotheses $\{H_j\}_{j <\infty}\cup\{H_\infty\}$ to be true. In this sense, we have a non-binary (in fact infinite) hypothesis testing setup, targeting the precise order of the expansion -- this explains the use of ``detection" rather than ``testing" terminology. Alternatively, one can bundle all finite orders together, and more coarsely interpret our decision for every $n$ as declaring $H_{\infty}$ (infinite expansion) or $``\mathrm{not}\,H_{\infty}"$ (finite expansion). This binary setting resembles the setting of Cover's \cite{cover1973determining} framework for testing whether a mean is rational/irrational (whereas the non-binary detection setting is analogous to detecting the precise number of decimals in the decimal expansion of the mean). In any case, our goal will be to have a procedure that will (almost surely) give the correct answer for sufficiently large sample sizes $n$.

The challenge is that the $n$ realisations of $X$ are not observed completely. Rather, we assume only being able to observe noisy evaluations of each of $n$ sample paths at $r$ random locations, yielding observations $\{Y_{ml}:m\leq n,l\leq r\}$ defined as follows:
\begin{eqnarray} \nonumber
  \underset{Y_{ml}}{\underbrace{Y_m\left(T_{ml}\right)}} = \underset{X_{ml}}{\underbrace{X_m\left(T_{ml}\right)}} + U_{ml}, \;\; m=1,2,\ldots, n, \;\; l =1,2,\ldots,r.
 \end{eqnarray}
 Here, $T_{ml}$ are independent (random) design points drawn from unit interval $[0,1]$, $U_{ml}$ are i.i.d second order measurement errors, and the collections $\{X_{m}\}$, $\{T_{ml}\}$ and $\{U_{ml}\}$ are totally independent across all indices $m$ and $l$. The number $r=r_n$ of measurements per curve is left unrestricted: allowing $r_n \to \infty$ yields a dense (high frequency) regime while bounding $r_n < R < \infty$ yields a sparse regime, and either cases will be admissible in our analysis. The only restriction is that $r\geq 2$ (if $r=1$ we have no information on covariance and the hypotheses are not testable). 
 
 Consequently, our asymptotic statements about almost surely detecting the correct hypothesis will be only with respect to $n$, i.e. a growing number of replications of $X$, but will not require $r$ to diverge.

\section{Hypothesis Testing}

\subsection{Preliminaries}

Our testing strategy will be to use an estimator $\widehat C_n=\widehat C_n (\{Y_{ml}\})$ that is consistent for $C$ at a rate that is almost surely bounded by some known sequence $\mathcal{R}(n)$ under both the null and alternative regimes. Specific estimators will yield specific forms of $\mathcal{R}(n)$ based on sufficient regularity conditions on $C$. We will restrict our framework to specific choices, but rather develop our theoretical results by assuming a general form (see Remark 1 for examples of specific choices and assumptions that adhere to this general form, and Section \ref{sec:implementation} for a concrete implementation):

 The desired general framework for estimating $C$ is summarised in the discrepancy condition formulated in the Assumption  \ref{discrepancy:Chat C} below:
\begin{assumption}[Rate of Estimation Bound]\label{discrepancy:Chat C} Assume that the Hilbert-Schmidt distance between $C$  and its estimator $\widehat{C}_{n}=\widehat{C}_{n}(\{Y_{ml}\})$ is almost surely eventually dominated by $\mathcal{R}(n)$, i.e. there exists $\Omega_0 \in \mathcal{F}$ with full measure,  $\mathbb{P}(\Omega_0)=1$ and finite number $n_0 $  such that  for any $\omega \in \Omega_0 $  we have
\begin{eqnarray}\label{Chat - C}
 \Vert \widehat{C}_{n}  - C \Vert_{\mathcal{S}} = \Vert \widehat{C}_{n}(\omega)  - C \Vert_{\mathcal{S}} < \mathcal{R}(n),  \quad \forall n > n_0,
\end{eqnarray} 
where  $\mathcal{R}(n)$ is independent of $\omega$ and decreasing to zero.
\end{assumption}
\begin{remark}[Settings Satisfying Assumption \ref{discrepancy:Chat C}]
Note that \eqref{Chat - C}  should be satisfied under each of hypotheses $H_j$, $j =1,\ldots, \infty$ appearing in \eqref{infinite:hyp:spect} (or \eqref{infinite:hyp.}). \citet{Li2010} provide  such an estimator,  under $\mathcal{C}^2$-smoothness assumptions on $C(\cdot,\cdot)$ and the mean function $\mu(\cdot)$; however, their result excludes basic examples like Brownian motion (and diffusions more generally) that can be described only through $H_{\infty}$. A refined estimator satisfying \eqref{Chat - C}  is given by \citet{Mohammadi2021}, accommodating covariances with singularity on the diagonal, and thus encompassing a large subclass of diffusion processes with covariance function satisfying $H_{\infty}$. We employ this setting/estimator in our implementation in Section \ref{sec:implementation}.
\end{remark}
We now define  some notation used in the sequel of the paper. Given an element $S\in \mathcal{S}^{+}$, we will write its spectral decomposition as
$$S= \sum_{l=1}^{\infty}\alpha_{l}(S) \phi_l(S) \otimes \phi_l(S),$$
i.e. $\alpha_l(S)$ will always denote the $l$-th eigenvalue of $S$ and $\phi_l(S)$ the corresponding eigenvector. The $j$-dimensional projection obtained by truncating the spectral decomposition will be denoted by 
$$ Q_j(S)=\sum_{l=1}^{j}\alpha_{l}(S) \phi_l(S) \otimes \phi_l(S).$$ 
Recall that $\|Q_j(S)-S\|_{\mathcal{S}}\leq \|B-S\|_{\mathcal{S}}$ for any $B\in \mathcal{S}$ of rank $j$ (Schmidt-Mirsky Theorem). For tidiness, when taking the special cases of $C$ and $\widehat{C}_{n}$, the spectral decompositions will be written in simplified form as $$\qquad {C} = \sum_{l=1}^{\infty}\alpha_{l}({C}) \phi_l({C}) \otimes \phi_l({C})=\sum_{l=1}^{\infty}{\alpha}_l  {\phi}_l  \otimes {\phi}_l\qquad\&\qquad\widehat{C}_{n} = \sum_{l=1}^{\infty}\alpha_{l}(\widehat{C}_{n}) \phi_l(\widehat{C}_{n}) \otimes \phi_l(\widehat{C}_{n})=\sum_{l=1}^{\infty}\widehat{\alpha}_l  \widehat{\phi}_l  \otimes \widehat{\phi}_l.$$
With this notation in place, we now also define
 \begin{eqnarray}\nonumber
\delta_{n}&:=& (1+\epsilon)\mathcal{R}(n),\;\;\;\; \mathrm{for\; some \;} \epsilon>0,
\\ \nonumber
i(S,\delta)&:=&\min\left\{ j:\Vert S-Q_j(S)\Vert_{\mathcal{S}}<\delta\right\}, 
 \end{eqnarray}
In words, $i(S,\delta)$ is the least truncation level at which $Q_j(S)$ approximates $S$ to $\delta$-precision. The main idea behind our detectoion procedure will be to use $i(\widehat{C}_n,\delta_n)$ in order to declare one of the hypotheses.
  
 \subsection{Detecting Finite Expressibility in Some Basis}
  
 We begin with the hypotheses in \eqref{infinite:hyp:spect}. Our detection procedure is described in Algorithm \ref{alg:1}. At a high level, we update our decisions for a sequence of sample sizes $n(j)$ and otherwise maintain the same decision for samples of size $n(j) \leq n < n(j+1)$. At an update sample size $n(j)$, we choose between $H_{i(\widehat C_{n(j)},\delta_{n(j)})}$ and $H_{\infty}$ depending on whether $i(\widehat C_{n(j)},\delta_{n(j)})$ exceeds a certain threshold $k(j)$. The update times and corresponding thresholds depend on the estimation method and the prior measure (implicitly through relation \eqref{spect:summability:nu}) but not on the data.
 \RestyleAlgo{ruled}
 \begin{algorithm}[hbt!]
 \caption{Procedure for Detecting Finite Expressibility in Some Basis}\label{alg:1}
 \normalsize
 \begin{enumerate}

     \item Given the prior measure $\nu$ and estimator $\widehat{C}_n$, let $k(j)$ and $n(j)$ be increasing sequences such that
     \begin{equation}\label{spect:summability:nu} \sum_{j=1}^{\infty}\nu \left\{
 S \in \mathcal{S}^{+}: 
 \alpha_l(S) \neq 0, \; \forall\, l\geq 1  \,\, \& \,\,
 \left( \sum_{l=k(j)+1}^{\infty} \alpha^{2}_{l}(S) \right)^{1/2} \leq 2 \delta_{n(j)}
  \right\}
 <\infty. 
 \end{equation}
     \item For $n=n(j)$, 
     \begin{itemize}
         \item If $i=i\left( \widehat{C}_{n(j)}, \delta_{n(j)} \right)
 \leq k(j)$ declare $H_i$
 \item Otherwise, declare $H_{\infty}$
     \end{itemize}
   \item For $n$ between $n(j)$ and $n(j+1)$, declare the same hypothesis as for $n(j)$.   
 \end{enumerate}
 \end{algorithm}

For the procedure to be valid, we must guarantee that there exist update times and thresholds compatible with relation \eqref{spect:summability:nu}. The proposition below establishes that such sequences can always be constructed (explicitly, as can be seen by inspecting the proof). Section \ref{sec:implementation} provides examples of prior measures $\nu$, and corresponding explicit choices of $n(j)$ and $k(j)$ (e.g. as  $k(j)=j$ and $n(j)=j^{3+\kappa}$, for some $\kappa>0$). 

 \begin{proposition}\label{spect:n(j),k(j)}
Given a finite positive measure $\nu$ on $\mathcal{S}$ with $\nu\left( \mathcal{N}^+ \right) >0$, and a non-decreasing sequence $\left\{k(j)\right\}_{j\geq 1}$  diverging to infinity, one can construct an increasing sequence  $\left\{n(j)\right\}_{j\geq 1}$ such that \eqref{spect:summability:nu} holds.
 \end{proposition}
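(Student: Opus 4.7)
The plan is to choose $n(j)$ large enough so that the $j$-th term in the sum of \eqref{spect:summability:nu} is bounded by $2^{-j}$, and separately to ensure that the resulting sequence $\{n(j)\}$ is strictly increasing. The main tool will be continuity from above of the finite measure $\nu$.

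First, I would fix $j$ and study the decreasing family of sets
$$E(\delta, k(j)) := \left\{ S \in \mathcal{S}^+ : \alpha_l(S) \neq 0 \;\forall l \geq 1, \; \left(\sum_{l=k(j)+1}^{\infty} \alpha_l^2(S)\right)^{1/2} \leq \delta \right\}, \quad \delta > 0.$$
The crucial observation is that $\bigcap_{\delta > 0} E(\delta, k(j)) = \emptyset$: the condition that \emph{all} eigenvalues are nonzero forces $\alpha_{k(j)+1}(S) > 0$, which in turn forces the tail Hilbert--Schmidt norm to be strictly positive. Since $\nu(\mathcal{S}) < \infty$, continuity from above yields $\nu(E(\delta, k(j))) \downarrow 0$ as $\delta \downarrow 0$. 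Hence there exists $\delta_j^\star > 0$ with $\nu(E(\delta_j^\star, k(j))) < 2^{-j}$.

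Next, because $\mathcal{R}(n) \to 0$ (and therefore $\delta_n = (1+\epsilon)\mathcal{R}(n) \to 0$), for each $j$ there is a threshold $N_j$ beyond which $2\delta_n \leq \delta_j^\star$. I would then define $n(j)$ recursively by setting $n(1) = N_1$ and $n(j) = \max\{N_j,\, n(j-1)+1\}$ for $j \geq 2$, yielding a strictly increasing sequence. By monotonicity, the $\nu$-measure of the $j$-th event in \eqref{spect:summability:nu} equals $\nu(E(2\delta_{n(j)}, k(j))) \leq \nu(E(\delta_j^\star, k(j))) < 2^{-j}$, so summing over $j$ gives a finite total, as required.

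I do not anticipate any serious obstacle, because \eqref{spect:summability:nu} reduces, once one isolates each index $j$, to continuity of a finite measure at the empty set. The only subtle point is that the hypothesis ``$\alpha_l(S) \neq 0$ for all $l \geq 1$'' built into the event is essential for that intersection to be empty: without it, one could include finite-rank $S$ of rank $\leq k(j)$ whose tail already vanishes, making the intersection nontrivial and the continuity argument fail. The construction also shows that considerable latitude is available in choosing $n(j)$, which is what allows the more explicit prescriptions such as $k(j)=j$ and $n(j)=j^{3+\kappa}$ (mentioned in the paper) to work under additional regularity hypotheses on $\nu$.
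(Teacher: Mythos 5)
Your proof is correct and follows essentially the same route as the paper's: both arguments reduce the claim to continuity from above of the finite measure $\nu$ applied to a decreasing family of tail events whose intersection is empty, precisely because the requirement $\alpha_l(S)\neq 0$ for all $l$ forces the tail Hilbert--Schmidt norm to be strictly positive. Your write-up merely makes explicit the summable bound ($2^{-j}$ where the paper uses $1/j^{2}$) and the enforcement of strict monotonicity of $n(j)$ via the recursion $n(j)=\max\{N_j,\,n(j-1)+1\}$.
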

 \begin{proof}
 For each positive integer $k$ the sequence of subsets 
 \begin{eqnarray} \nonumber
  A_{n,k} = \left\{
   S \in \mathcal{S}^{+}: \alpha_l(S) \neq 0, \; \forall\, l\geq 1  \,\, \& \,\, \left( \sum_{l=k+1}^{\infty} \alpha^{2}_{l}(S) \right)^{1/2} \leq 2 \delta_n \right\},
 \end{eqnarray}
decreases with respect to $n$ to the empty set, i.e. for all $k$,  $A_{n,k} \searrow \varnothing$, as $n\to\infty$. Since $\nu$ is a finite measure and hence continuous from above, for an arbitrary increasing sequence $k(j)$ one may choose the corresponding strictly increasing sequence $n(j)$ such that $\nu\left( A_{n(j),k(j)}\right)$ be summable (bounded by ${1}/{j^2}$, for example). \end{proof}

Condition \eqref{spect:summability:nu} makes no reference to the data, it is rather more like an identifiability condition. Its role is to ensure that $H_\infty$ is possibly detectable (equivalently, to ensure that the covariances for which $H_\infty$ is possibly undetectable are a set of measure zero under the prior $\nu$). 

Our first main result, in the form of Theorem \ref{spect:main} below, now shows that the procedure as described will almost certainly choose the correct decision for all $n$ sufficiently large:

\begin{theorem}\label{spect:main}
Let $\nu$ be a finite positive prior measure on $\mathcal{S}$ with $\nu(\mathcal{N}^+)>0$ and assume that Assumption \ref{discrepancy:Chat C} holds true. Let $\{H_j\}$ and $H_\infty$ be defined as in \eqref{infinite:hyp:spect}. If $H_q$ is true for some $q<\infty$, then with probability 1 the procedure in Algorithm \ref{alg:1} will declare $H_q$ for all $n$ sufficiently large. If $H_{\infty}$ is true, then with probability 1 the procedure in Algorithm \ref{alg:1} will declare $H_\infty$ for all $n$ sufficiently large, except perhaps if $C$ belongs to a $\nu$-negligible subset.
 \end{theorem}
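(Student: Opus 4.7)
The plan is to split into the two cases according to whether $H_q$ with $q<\infty$ or $H_\infty$ holds, working throughout on the full-probability event $\Omega_0$ of Assumption~\ref{discrepancy:Chat C}. The main ingredients are the discrepancy bound $\|\widehat C_n-C\|_{\mathcal{S}}<\delta_n/(1+\epsilon)$, the Schmidt--Mirsky optimality of the truncations $Q_j(\cdot)$, Weyl's inequality for individual eigenvalues, and a Borel--Cantelli argument with respect to the prior $\nu$.

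For the finite case, suppose $H_q$ is true and fix $\omega\in\Omega_0$. Since $C$ has rank $q$, Schmidt--Mirsky applied to $\widehat C_n$ gives
$$\|\widehat C_n-Q_q(\widehat C_n)\|_{\mathcal{S}}\le \|\widehat C_n-C\|_{\mathcal{S}}<\delta_n/(1+\epsilon)<\delta_n,$$
so $i(\widehat C_n,\delta_n)\le q$ eventually. For the matching lower bound, Weyl's inequality yields $\widehat\alpha_q\ge \alpha_q-\|\widehat C_n-C\|_{\mathcal{S}}$, which tends to $\alpha_q>0$, so eventually $\widehat\alpha_q>\delta_n$ and therefore $\|\widehat C_n-Q_{q-1}(\widehat C_n)\|_{\mathcal{S}}\ge \widehat\alpha_q>\delta_n$, forcing $i(\widehat C_n,\delta_n)\ge q$. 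Combining, $i(\widehat C_n,\delta_n)=q$ for all $n$ sufficiently large, and since $k(j)\to\infty$, for $j$ large enough we have $k(j)\ge q$ and the algorithm declares $H_q$ at every update time $n(j)$, hence for all $n$ sufficiently large.

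For the infinite case, the key step is to translate the (data-dependent) event that the algorithm declares a finite hypothesis at $n(j)$ into a (deterministic in $C$) event on which Borel--Cantelli under $\nu$ can bite. If on $\Omega_0$ the algorithm declares a finite hypothesis at $n(j)$, then by definition $\|\widehat C_{n(j)}-Q_{k(j)}(\widehat C_{n(j)})\|_{\mathcal{S}}<\delta_{n(j)}$; applying Schmidt--Mirsky to $C$ (so that $Q_{k(j)}(C)$ is the best rank-$k(j)$ approximation of $C$) together with the triangle inequality and Assumption~\ref{discrepancy:Chat C},
$$\Bigl(\sum_{l>k(j)}\alpha_l^2(C)\Bigr)^{1/2}=\|C-Q_{k(j)}(C)\|_{\mathcal{S}}\le \|C-Q_{k(j)}(\widehat C_{n(j)})\|_{\mathcal{S}}\le \|C-\widehat C_{n(j)}\|_{\mathcal{S}}+\|\widehat C_{n(j)}-Q_{k(j)}(\widehat C_{n(j)})\|_{\mathcal{S}}<2\delta_{n(j)};$$
since $H_\infty$ forces $\alpha_l(C)\neq 0$ for every $l$, this places $C$ in the set $A_{n(j),k(j)}$ defined in the proof of Proposition~\ref{spect:n(j),k(j)}. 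Letting $N=\limsup_j A_{n(j),k(j)}$, the summability condition~\eqref{spect:summability:nu} and finiteness of $\nu$ give $\nu(N)=0$ by Borel--Cantelli. Hence for every $C\notin N$ satisfying $H_\infty$, on $\Omega_0$ the algorithm errs at only finitely many update times and declares $H_\infty$ for all $n$ sufficiently large. The delicate point, which I expect to require most care, is the order of quantifiers: $N$ must be defined purely in terms of $C$ and the deterministic sequences $n(j)$, $k(j)$, $\mathcal{R}(n)$, so that a single $\nu$-null set works across the entire probability space; this is precisely where the prior $\nu$ earns its keep, by absorbing the covariances whose tail decay is so fast that $H_\infty$ is masqueraded by $H_{k(j)}$ at every update time.
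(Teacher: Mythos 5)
Your proof is correct and follows essentially the same route as the paper: Schmidt--Mirsky to show $i(\widehat C_n,\delta_n)\le q$ eventually in the finite case, a lower bound on the residual to pin $i(\widehat C_n,\delta_n)=q$, and in the infinite case the same triangle-inequality reduction to the deterministic sets $A_{n(j),k(j)}$ followed by Borel--Cantelli under $\nu$ (your $N=\limsup_j A_{n(j),k(j)}$ is exactly the paper's $N_0$). The only cosmetic difference is that you obtain the lower bound $\|\widehat C_n-Q_{q-1}(\widehat C_n)\|_{\mathcal{S}}\ge\widehat\alpha_q>\delta_n$ via Weyl's inequality on the $q$-th eigenvalue, whereas the paper bounds $\|C-Q_{q-1}(\widehat C_n)\|_{\mathcal{S}}\ge\alpha_q$ by a second application of Schmidt--Mirsky; both are equally valid.
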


  It might be worth contrasting our procedure with a sequential testing procedure, assuming one were available. Such a procedure would typically compute a test statistic, say $T_j=\Vert S-Q_j(S)\Vert_{\mathcal{S}}$ at step $j$ and then compare it to a critical value to decide between accepting $H_j$ (and stopping the decision making process) or rejecting $H_j$ and proceeding to the next step. Even if adjusted for multiple testing, the fact that this procedure would have to test several hypotheses means that it would likely fail to detect $H_\infty$, or even $H_j$ for a large but finite $j$. Intuitively, there are too many decisions to be made, making it difficult to detect high dimensions. By contrast, Theorem \ref{spect:main}, basically \emph{only tests a single hypothesis pair for a given $n$} -- but adaptively chooses which pair to test by means of the quantity $i(\widehat{C}, \delta)$, thus avoiding making so many (and hence so many wrong) decisions.
  
  As for asymptotic performance guarantees, rather than controlling the probability of an error as $n\rightarrow\infty$, as one typically  does, our procedure ensures that w.p.1 there exists a sample size past which no errors are ever committed. For concreteness, let us focus on the binary problem of choosing between  $H_{<\infty}:=\{\mathrm{not }\,H_\infty\}$ or $H_\infty$. A conventionally consistent testing approach guarantees that, for some suitable class of candidate covariances, $P_{H_{<\infty}}\{\mathrm{decide}\, H_{<\infty}\}\stackrel{n\to\infty}{\longrightarrow}1-\alpha$ for some prespecified significance level $\alpha\in(0,1)$; and $P_{H_\infty}\{\mathrm{decide}\, H_\infty\}\stackrel{n\to\infty}{\longrightarrow}1$. Our procedure satisfies a stronger form of consistency, in that for a suitable class of covariances $P_{H_\infty}\{\exists N:\,\forall n\geq N \,\mathrm{we}\,\mathrm{decide}\, H_\infty\}=1$ and $P_{H_{<\infty}}\{\exists N:\,\forall n\geq N \,\mathrm{we}\,\mathrm{decide}\, H_{<\infty}\}=1$. This type of consistency framework is the same as in Cover's classical irrationality testing \citep{cover1973determining}. 
  
 \begin{proof}[Proof of Theorem \ref{spect:main}]
 First assume that Covariance operator $C$ is finite dimensional with dimension $i$ i.e.
 \begin{eqnarray}\label{finite:spect:rep}
  C = \sum_{l=1}^{i}\alpha_l \phi_l\otimes \phi_l, \quad \alpha_i \neq 0,
 \end{eqnarray}
 for some finite integer $1 \leq i <\infty$. By the Schmidt-Mirsky theorem,
\begin{eqnarray}\label{Sch_Mirs}
  \left\Vert \widehat{C}_{n}-Q_i \left( \widehat{C}_{n} \right) \right\Vert_{\mathcal{S}} &\leq& \left\Vert\widehat{C}_{n}-C\right\Vert_{\mathcal{S}}
  \label{Sch:Mirsk:1}\\
  &\leq& \delta_n \;\;\; a.s.\;\; \mathrm{eventually}, \nonumber
\end{eqnarray}
and hence,  $i\left( \widehat{C}_{n},\delta_n \right)\leq i$, a.s. eventually. On the other hand, observe that
\begin{eqnarray}
 \mathbb{P}\left( i\left(\widehat{C}_{n},\delta_n \right) <  i ; \; i.o.  \right) &=& \mathbb{P}\left(  \cup_{k=1}^{i-1} B_k; \; i.o. \right); \;\;\; B_k = \left\{\sum_{l=k+1}^{\infty} \widehat{\alpha}_l^{2} < \delta_{n}^{2}\right\} 
 \nonumber \\
 & = & \mathbb{P}\left(
  \sum_{l=i}^{\infty} \widehat{\alpha}_l^{2} < \delta_{n}^{2}; \; i.o. \right)
  \label{nested:events}\\
  &=&  \mathbb{P}\left( 
  \left\Vert\widehat{C}_{n}-Q_{i-1} \left( \widehat{C}_{n} \right)\right\Vert_{\mathcal{S}} < \delta_n ; \; i.o. \right)
  \nonumber \\
   &=&  \mathbb{P}\left(   \left\Vert\widehat{C}_{n}-C+C-Q_{i-1} \left( \widehat{C}_{n} \right)\right\Vert_{\mathcal{S}} < \delta_n ; \; i.o. \right)
 \nonumber \\
  &\leq&  \mathbb{P}\left(   \left\Vert C-Q_{i-1} \left( \widehat{C}_{n} \right)\right\Vert_{\mathcal{S}} < \delta_n + \left\Vert \widehat{C}_{n}-C \right\Vert_{\mathcal{S}}; \; i.o. \right)
  \nonumber \\
   &\leq&  \mathbb{P}\left(   \alpha_{i} < \delta_n + \left\Vert \widehat{C}_{n}-C \right\Vert_{\mathcal{S}}; \; i.o. \right)
   \label{Sch:Mirsk:2}
   \\
   &=&0, \label{LePage:Thm:1}
\end{eqnarray}
where \eqref{nested:events} is concluded by the nestedness of the events $B_k$, $B_1 \subseteq B_2 \subseteq \ldots \subseteq B_{i-1}$. The inequality \eqref{Sch:Mirsk:2} is a result of the Schmidt-Mirsky theorem, and the final equality \eqref{LePage:Thm:1} is obtained by \eqref{Chat - C}.
It follows that the probability that $i\left(\widehat{C}_{n},\delta_n \right) \geq  i $ eventually is one. This completes the proof  for the case \eqref{finite:spect:rep}.
\\
 Now assume that the covariance operator $C$ is of infinite rank
 \begin{eqnarray}\label{infinite:spect:rep}
  C = \sum_{l=1}^{\infty}\alpha_l \phi_l \otimes \phi_l; \;\;\; \alpha_l \neq 0, \; l=1,2,\ldots.
 \end{eqnarray}
 Define the set
 \begin{eqnarray}
  N_{0} =  \left\{
  S \in \mathcal{S}^{+}; 
  \alpha_l(S) \neq 0, \; \forall l, \,\, \& \,\, \left( \sum_{l=k(j)+1}^{\infty} \alpha^{2}_{l}(S) \right)^{1/2} \leq 2 \delta_{n(j)};\; \;i.o.
  \right\},
 \end{eqnarray}
 which by relation \eqref{spect:summability:nu} is a $\nu$-negligible subset of $\mathcal{S}$.
 Then we have
 \begin{eqnarray}
 \mathbb{P}\left( i\left(\widehat{C}_{n(j)},\delta_{n(j)} \right) \leq  k(j) ; \; i.o  \right) 
  &=&  \mathbb{P}\left( 
  \left\Vert\widehat{C}_{n(j)}-Q_{k(j)} \left( \widehat{C}_{n(j)} \right)\right\Vert_{\mathcal{S}} < \delta_{n(j)} ; \; i.o. \right)
  \nonumber \\
   &=&  \mathbb{P}\left(   \left\Vert \widehat{C}_{n(j)}-C+C-Q_{k(j)} \left( \widehat{C}_{n(j)} \right)\right\Vert_{\mathcal{S}} < \delta_n(j) ; \; i.o. \right)
 \nonumber \\
  &\leq&  \mathbb{P}\left(   \left\Vert C-Q_{k(j)} \left( \widehat{C}_{n(j)} \right)\right\Vert_{\mathcal{S}} < \delta_{n(j)} + \left\Vert \widehat{C}_{n(j)}-C \right\Vert_{\mathcal{S}}; \; i.o. \right)
  \nonumber \\
  &\leq& \mathbb{P}\left(  
  \left( \sum_{l=k(j)+1}^{\infty} \alpha^{2}_{l} \right)^{1/2}
    < \delta_{n(j)} + \left\Vert \widehat{C}_{n(j)}-C \right\Vert_{\mathcal{S}}; \; i.o. \right)
  \label{Sch:Mirsk:3}\\
   &\leq&  \mathbb{P}\left(   2\delta_{n(j)} < \delta_{n(j)} + \left\Vert \widehat{C}_{n(j)}-C \right\Vert_{\mathcal{S}}; \; i.o. \right)
   \label{out of N_0}
   \\
   &=&0.\label{LePage:Thm:2}
\end{eqnarray}
 We used the Schmidt-Mirsky theorem to obtain \eqref{Sch:Mirsk:3}. Inequality \eqref{out of N_0} is by concentrating on the complement of $N_0$, and the final equality \eqref{LePage:Thm:2} is a result of \eqref{Chat - C}.   This gives the desired result for the infinte rank case, and completes the proof.
 \end{proof}

 \begin{remark}[Testing $\{H_1,...,H_q\}$ vs $\left\{\cup_{j>q}\{H_j\}\right\}\cup H_{\infty}$]\label{remark:fixed_boundary}
If we are only interested in knowing whether the process has a dimension of at most $q$, for some $q<\infty$, then we can choose $k(j)=q$ to be constant, and  modify Algorithm \ref{alg:1} to pronounce $H_i$ when $i=i(\widehat{C}_{n(j)},\delta_{n(j)})\leq q$ and otherwise declare $\left\{\cup_{j>q}\{H_j\}\right\}\cup H_{\infty}$. In this context, an inspection of the proofs of Proposition \ref{spect:n(j),k(j)}  and Theorem \ref{spect:main}, shows that they remain valid as statements (in the case of Theorem \ref{spect:main}, replacing $H_{\infty}$ by $\left\{\cup_{j>q}\{H_j\}\right\}\cup H_{\infty}$).
 \end{remark}

 \subsection{Detecting Finite Expressibility in a Given Basis}

We now turn to  the hypothesis testing problem \eqref{infinite:hyp.},  
  \begin{eqnarray}\nonumber
     H_j:C=\sum_{l=1}^{j}a_l \Psi_l,\;\;\; a_j\neq 0,\;\;\;\; j=1,2,\dots,\;\;\;\;\;\;\; H_{\infty}:C=\sum_{l=1}^{\infty}a_l \Psi_l,\;\;\; a_l\neq 0 \;\;\mathrm{for\; infinitely\; many}\; l,
 \end{eqnarray}
where  $\left\{ \Psi_l \right\}$ is assumed to be a fixed known CONS in $\mathcal{S}$. We wish to have a similar procedure with the same asymptotic guarantees in this setting. To this aim we update our notation to write
 \begin{eqnarray} \nonumber
 W_j&:=& 
{\mathrm{span}}\left\{ \Psi_1,\Psi_2, \ldots, \Psi_j \right\},
 \\ \nonumber
 \iota(S,\delta)&:=&\min\left\{ j:\left\Vert  S-P_j(S) \right\Vert_{\mathcal{S}}<\delta\right\},  
 \end{eqnarray}
 where $S \in \mathcal{S}$ admits the (unique) representation $
  S= \sum_{l=1}^{\infty}\langle \Psi_l,S\rangle_{\mathcal{S}}\Psi_l$ and $P_j(S)$ denotes the orthogonal projection of $S$ onto $W_j$, i.e. $P_j(S)=\sum_{l=1}^{j}\langle \Psi_l,S\rangle_{\mathcal{S}}\Psi_l$ and satisfying
  \begin{eqnarray}\label{projrction}
   \left\Vert S-P_j(S)\right\Vert_{\mathcal{S}} =  \inf_{w\in W_{j}}\left\Vert S-w\right\Vert_{\mathcal{S}}.
  \end{eqnarray}

\noindent In this notation, the procedure now is virtually the same as in Algorithm \ref{alg:1}, except that condition \eqref{spect:summability:nu}   in step 1
is replaced by  condition \eqref{summability:nu} and $i(C,\delta)$ is replaced by $\iota(C,\delta)$ in step 2 -- see Algorithm \ref{alg:2}.
  The existence of update times $n(j)$ and thresholds $k(j)$ satisfying \eqref{summability:nu} is proven similarly as in Proposition \ref{spect:n(j),k(j)}. \\
 
 \RestyleAlgo{ruled}
 \begin{algorithm}[H]
 \caption{Procedure for Detecting Finite Expressibility in a Given Basis}\label{alg:2}
 \normalsize
 \begin{enumerate}

     \item Given the prior measure $\nu$ and estimator $\widehat{C}_n$, let $k(j)$ and $n(j)$ be increasing sequences such that
  \begin{eqnarray}\label{summability:nu}
 \sum_{j=1}^{\infty}\nu\left\{ S \in \mathcal{S}\setminus \bigcup\limits_{l=1}^{\infty} W_l: \inf_{w\in W_{k(j)}}\left\Vert S-w\right\Vert_{\mathcal{S}}\leq 2\delta_{n(j)} \right\}<\infty.
 \end{eqnarray}
     \item For $n=n(j)$, 
     \begin{itemize}
         \item If $i=\iota\left( \widehat{C}_{n(j)}, \delta_{n(j)} \right)
 \leq k(j)$ declare $H_i$
 \item Otherwise, declare $H_{\infty}$
     \end{itemize}
   \item For $n$ between $n(j)$ and $n(j+1)$, declare the same hypothesis as for $n(j)$.   
 \end{enumerate}
 \end{algorithm}

 \bigskip
 
\noindent Theorem \ref{main} can now be stated in this case, with {indirect} similarity with Theorem \ref{spect:main}.

 \begin{theorem}\label{main}
 Let $\nu$ be a finite positive prior measure on $\mathcal{S}$ with $\nu(\mathcal{N}^+)>0$ and assume that Assumption \ref{discrepancy:Chat C} holds true. Let $\{H_j\}$ and $H_\infty$ be defined as in \eqref{infinite:hyp.}. If $H_q$ is true for some $q<\infty$, then with probability 1 the procedure in Algorithm \ref{alg:2} will declare $H_q$ for all $n$ sufficiently large. If $H_{\infty}$ is true, then with probability 1 the procedure in Algorithm \ref{alg:2} will declare $H_\infty$ for all $n$ sufficiently large, except perhaps if $C$ belongs to a $\nu$-negligible subset.
 \end{theorem}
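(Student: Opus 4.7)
The plan is to mimic the proof of Theorem~\ref{spect:main} almost verbatim, with the following substitutions: the spectral truncation $Q_j$ is replaced by the orthogonal projection $P_j$ onto the fixed subspace $W_j$; the index $i(\cdot,\cdot)$ by $\iota(\cdot,\cdot)$; and \eqref{spect:summability:nu} by \eqref{summability:nu}. The substantive change is that wherever the earlier proof invokes the Schmidt--Mirsky theorem (optimality of $Q_j(S)$ among rank-$j$ approximants of $S$), I would now invoke the orthogonal-projection optimality \eqref{projrction}, namely that $P_j(S)$ is optimal among elements of the fixed linear subspace $W_j$. This is if anything a cleaner tool, since $W_j$ does not depend on $S$.

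For the finite case $H_q$, $q<\infty$, I would first use that $C\in W_q$ and apply \eqref{projrction} to $\widehat{C}_n$ to obtain $\|\widehat{C}_n-P_q(\widehat{C}_n)\|_{\mathcal{S}}\le \|\widehat{C}_n-C\|_{\mathcal{S}}<\delta_n$ eventually a.s.\ by Assumption~\ref{discrepancy:Chat C}, so $\iota(\widehat{C}_n,\delta_n)\le q$ eventually. For the matching lower bound I would run the nested-events bookkeeping of \eqref{nested:events}, reducing $\{\iota(\widehat{C}_n,\delta_n)<q \text{ i.o.}\}$ to $\{\|\widehat{C}_n-P_{q-1}(\widehat{C}_n)\|_{\mathcal{S}}<\delta_n \text{ i.o.}\}$. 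Since $P_{q-1}(\widehat{C}_n)\in W_{q-1}$, applying \eqref{projrction} to $C$ and the triangle inequality yields
$$|a_q|=\|C-P_{q-1}(C)\|_{\mathcal{S}}\le \|C-P_{q-1}(\widehat{C}_n)\|_{\mathcal{S}}\le \|\widehat{C}_n-C\|_{\mathcal{S}}+\|\widehat{C}_n-P_{q-1}(\widehat{C}_n)\|_{\mathcal{S}}<2\delta_n \ \text{i.o.},$$
which contradicts $a_q\ne 0$ and $\delta_n\to 0$.

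For the infinite case $H_\infty$, the key observation is that \eqref{summability:nu}, combined with the first Borel--Cantelli lemma applied to the finite measure $\nu$, makes
$$N_0 := \left\{S\in \mathcal{S}\setminus \bigcup_{l=1}^{\infty} W_l\,:\, \inf_{w\in W_{k(j)}}\|S-w\|_{\mathcal{S}}\le 2\delta_{n(j)}\ \text{i.o.}\right\}$$
a $\nu$-negligible subset. For $C\notin N_0$ satisfying $H_\infty$, one has $\inf_{w\in W_{k(j)}}\|C-w\|_{\mathcal{S}}> 2\delta_{n(j)}$ eventually. Hence, if $\iota(\widehat{C}_{n(j)},\delta_{n(j)})\le k(j)$ occurred i.o., the same triangle-plus-projection-optimality chain as above would give $\inf_{w\in W_{k(j)}}\|C-w\|_{\mathcal{S}}<2\delta_{n(j)}$ i.o., a contradiction. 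So the algorithm declares $H_\infty$ eventually a.s.\ outside a $\nu$-null set.

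The main (really the only) obstacle is conceptual: recognising that \eqref{summability:nu} is tailored precisely so that Borel--Cantelli against $\nu$ produces the required exceptional set, in direct analogy with the role of \eqref{spect:summability:nu} in the spectral proof. Everything else is a mechanical transcription, with the Hilbert-space projection $P_j$ standing in for the Schmidt--Mirsky truncation $Q_j$.
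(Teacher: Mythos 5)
Your proposal is correct and follows essentially the same route as the paper, which itself only sketches this proof by noting that one substitutes the nearest-point property \eqref{projrction} for the Schmidt--Mirsky theorem at steps \eqref{Sch_Mirs}, \eqref{Sch:Mirsk:2} and \eqref{Sch:Mirsk:3}, uses the unique representation of $C$ in $\{\Psi_l\}$, and takes the exceptional set to be exactly the $N_0$ you define. Your explicit lower-bound chain $|a_q|=\|C-P_{q-1}(C)\|_{\mathcal{S}}\le\|C-P_{q-1}(\widehat{C}_n)\|_{\mathcal{S}}$ and the Borel--Cantelli reading of \eqref{summability:nu} are precisely the intended details.
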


 The proof is also similar except for inequality \eqref{Sch_Mirs} where we need to use  the Nearest Point Property \eqref{projrction}, and inequalities \eqref{Sch:Mirsk:2} and
\eqref{Sch:Mirsk:3} where we need to use the unique representation of $C$ in the basis $\left\{ \Psi_l\right\}$. As in the proof of Theorem \ref{spect:main}, the $\nu$-null set where detection of $H_\infty$ may not be ensured eventually (almost surely) can be explicitly described here too as
  $$N_0 := 
  \left\{ S \in \mathcal{S}\setminus \bigcup\limits_{l=1}^{\infty} W_l: \inf_{w\in W_{k(j)}}\left\Vert S-w\right\Vert_{\mathcal{S}}\leq 2\delta_{n(j)}; \;\; i.o. \right\}.$$
 


\noindent We finally  conclude by noting that a Remark similar to Remark \ref{remark:fixed_boundary} applies in this case as well.

\section{Implementation}\label{sec:implementation}
In the present section we provide a concrete implementation of the procedure, by way of providing examples of estimators $\widehat C_n$ and prior measures $\nu$ (and associated sequences $\{k(j)\}$ and $\{n(j)\}$), as required in Algorithms \ref{alg:1} and \ref{alg:2} (and Theorems \ref{spect:main} and \ref{main}).


\subsection*{Estimators}

 Suitable estimators $\widehat{C}_{n}$ can be constructed based on \citet{Mohammadi2021}. For ease of reference, we recapitulate the methodology immediately below, and the corresponding theory in the form of Theorem \ref{P.M: C - Chat}. \\
 
\noindent  Consider the estimator
\begin{eqnarray}
 \widehat{C}_{n} (s,t) = \widehat{G}(s,t) - \widehat{\mu}(s) \widehat{\mu}(t),
\end{eqnarray}
where for any given pair $(s,t)$ we define $\widehat{G}(s,t) = \widehat{a}_0(s,t)$ and $ \widehat{\mu}(s) = \widehat{b}_0(s)$ via the following optimization problems
\begin{eqnarray}\label{local:lin:G}
\left( \widehat{a}_0 , \widehat{a}_1, \widehat{a}_2  \right)(s,t) & = & \\\nonumber
&&\underset{(a_0,a_1, a_2)}{\mathrm{argmin}}\frac{1}{nh^2_G}\sum_{m=1}^{n}\frac{2}{r(r-1)}\sum_{1\leq k<l \leq r}
\left[\left\{Y_{mk}Y_{ml}-a_0-a_1\left( T_{ml}-s\right)-a_2 \left( T_{mk}-t\right)\right\}^2\right.
\\ \nonumber
&& \hspace{7.5cm} \times \left. W\left(\frac{ T_{ml}-s}{h_{G}}\right)W\left(\frac{ T_{mk}-t}{h_{G}}\right)\right], 
\end{eqnarray}
 and 
\begin{eqnarray} \label{local:lin:mu}
\left( \widehat{b}_0 , \widehat{b}_1 \right)(s) = \underset{(b_0,b_1)}{\mathrm{argmin}}\frac{1}{nh_{\mu}}\sum_{m=1}^{n}\frac{1}{r}\sum_{l=1}^{r}\left\{Y_{ml}-b_0-b_1\left( T_{ml}-s\right) \right\}^2 W\left(\frac{ T_{ml}-s}{h_{\mu}}\right),
\end{eqnarray}
for all $0 \leq t \leq s \leq 1$. Here $W(\cdot)$ is  some appropriately chosen symmetric univariate, integrable kernel, possibly with compact support 
(see \citet{Mohammadi2021} for a detailed discussion) and $h_{\mu}$ and $h_G$ are one dimensional bandwidth parameters. 

Intuitively, the mean function is estimated by pooling and smoothing all the observations $\{Y_{ml}\}$ by means of a locally linear smoother. And, the covariance is estimated by pooling and smoothing all ``correlations" $Y_{mk}Y_{ml}$ for $1\leq k<l\leq r$ via a local quadratic smoother (we enforce $k<l$ to circumvent the presence of  measurement error). Correlation smoothing is done only on the triangle $\bigtriangleup=\left\{(s,t)\in [0,1]^2 \mid 0 \leq t \leq s \leq 1  \right\}$  to bypass the potential non-differentiability of the covariance along the diagonal (e.g. in case of a diffusion process).

\citet{Mohammadi2021} prove the following concerning this estimator: 

\begin{theorem}\label{P.M: C - Chat}
Assume that there exists $M_T>0$ for which we have $0< P\left(T_{ij} \in [a,b] \right) \leq M_T (b-a)$, for all $i,j$ and $0 \leq a < b \leq 1$. Furthermore, assume that  
$\mathbb{E}\vert U_{ij}      \vert^{\zeta}< \infty$ and  $\underset{0\leq s \leq 1}{\mathrm{sup}} \mathbb{E} \vert X(s)\vert^{\zeta} < \infty $, for some $\zeta >4$. Finally assume that $\mu\in \mathcal{C}^2[0,1]$ and $C \in \mathcal{C}^2(\bigtriangleup)$. Then, with probability $1$, we have
\begin{eqnarray}\nonumber
 \underset{0\leq t \leq s \leq 1}{\mathrm{sup}}\left|\widehat{C}_n(s, t) - C(s, t)\right|
 &=&  O(\tau (n)),
\end{eqnarray}
where $\tau (n) = \left[h_{G}^{-4} \frac{\log n}{n}\left( h_G^4 + \frac{h_G^3}{r}+ \frac{h_G^2}{r^2} \right) \right]^{1/2} + \left[ h^{-2}_{\mu} \frac{\log n}{n}\left(h^2_{\mu} + \frac{h_{\mu}}{r}  \right) \right]^{1/2}+ h^2_{\mu} + h^2_G $.\\
If we furthermore assume a dense sampling regime, i.e. $r_n \geq M_n$ for  $M_n\uparrow \infty$, and select the bandwidth parameters to satisfy $M_n^{-1} \lesssim h_{\mu}, h_{G} \lesssim \left( \frac{\log n}{n} \right)^{1/4} $, we can reduce $\tau(n)$   to $ \left(\frac{\log n}{n}\right)^{1/2}$ , i.e. 
\begin{eqnarray}
 \underset{0\leq t \leq s \leq 1}{\mathrm{sup}}\left|\widehat{C}_n(s, t) - C(s, t)\right|= O \left(\frac{\log n}{n}\right)^{1/2},\qquad\mathrm{almost}\,\mathrm{surely}.
\end{eqnarray}
\end{theorem}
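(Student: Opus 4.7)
The plan is to decompose the error as
$$\widehat C_n(s,t) - C(s,t) = \bigl[\widehat G(s,t) - G(s,t)\bigr] - \bigl[\widehat\mu(s)\widehat\mu(t)-\mu(s)\mu(t)\bigr],$$
with $G(s,t) = \mathbb{E}[X(s)X(t)]$, and then to use the algebraic identity $\widehat\mu(s)\widehat\mu(t)-\mu(s)\mu(t) = (\widehat\mu(s)-\mu(s))\widehat\mu(t)+\mu(s)(\widehat\mu(t)-\mu(t))$ together with the trivial bound $\sup_s|\widehat\mu(s)|\le\sup_s|\mu(s)|+\sup_s|\widehat\mu(s)-\mu(s)|$, so that any uniform bound on $\widehat\mu-\mu$ transfers into a uniform bound on the product term. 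Since $\widehat\mu-\mu$ will turn out to be of smaller order than $\widehat G-G$ in the relevant regimes, the problem reduces to establishing uniform almost-sure rates on $\bigtriangleup$ for the local-linear mean estimator \eqref{local:lin:mu} and the local-quadratic second-moment estimator \eqref{local:lin:G}.

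First I would handle $\widehat\mu$. Solving \eqref{local:lin:mu} in closed form yields a representation $\widehat\mu(s)=e_1^{\top}S_n(s)^{-1}T_n(s)$ for a local design matrix $S_n(s)$ and response vector $T_n(s)$. Under the bounded-density hypothesis $0<P(T_{ij}\in[a,b])\le M_T(b-a)$, one shows that $(nh_\mu)^{-1}S_n(s)$ converges uniformly to a deterministic, uniformly invertible limit; this gives an asymptotic linear representation for $\widehat\mu(s)-\mu(s)$ whose deterministic part is of order $h_\mu^2$ (by the $\mathcal{C}^2$ assumption on $\mu$ and a second-order Taylor expansion against kernel moments) and whose stochastic part is a doubly-indexed sum over curves and within-curve observations with variance of order $h_\mu^{-2}n^{-1}(h_\mu^2+h_\mu/r)$ once within-curve dependence is accounted for. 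Uniformity in $s$ is then obtained via chaining on an $\varepsilon$-net of cardinality polynomial in $1/h_\mu$, combined with a Bernstein-type exponential inequality after truncation at level $n^{1/\zeta}$; the moment hypothesis $\zeta>4$ makes the tail probabilities summable in $n$, so Borel--Cantelli delivers the almost-sure rate $[\log n\cdot h_\mu^{-2}n^{-1}(h_\mu^2+h_\mu/r)]^{1/2}+h_\mu^2$.

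The analysis of $\widehat G$ follows the same blueprint but is substantially more delicate, and this is where the main obstacle sits. The local-quadratic normal equations again yield a linear representation with deterministic bias of order $h_G^2$, courtesy of $C\in\mathcal{C}^2(\bigtriangleup)$ (restricting to $\bigtriangleup$ is exactly what rules out the non-differentiability along the diagonal typical of diffusions). The stochastic part is a sum over curves of a within-curve $U$-statistic of degree two in $Y_{mk}Y_{ml}$, $k<l$. Expanding $Y_{mk}Y_{ml}=X_m(T_{mk})X_m(T_{ml})+\text{cross noise terms}$ and computing the conditional variance given the design produces three distinct variance components: a between-curve piece independent of $r$ (responsible for the $h_G^4$ entry inside the bracket of $\tau(n)$), a mixed piece contributing $h_G^3/r$, and a pure within-curve measurement-error piece contributing $h_G^2/r^2$, all scaled by $h_G^{-4}n^{-1}$. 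The hard part will be upgrading these pointwise variance bounds to a uniform bound over $\bigtriangleup$: the $U$-statistic structure precludes naive symmetrisation, so one truncates the summands at $n^{1/\zeta}$, applies a Bernstein-type inequality to each of the three components separately (so each exponential rate matches its own variance scale), and runs a chaining argument on a two-dimensional grid of mesh polynomial in $h_G$. Finiteness of the $\zeta>4$ moment is precisely what makes the truncation residuals almost-surely negligible and cannot be weakened to bare second-order integrability.

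Assembling the two bounds and absorbing the $\widehat\mu-\mu$ contribution (which is of smaller order by the preceding paragraph) delivers the announced almost-sure rate $O(\tau(n))$. For the dense regime $r_n\ge M_n\uparrow\infty$ with $M_n^{-1}\lesssim h_\mu,h_G\lesssim(\log n/n)^{1/4}$, the $h_G^3/r$ and $h_G^2/r^2$ terms are dominated by $h_G^4$ inside the covariance bracket, so the variance contribution to $\tau(n)$ collapses to $(\log n/n)^{1/2}$; the bias terms $h_\mu^2+h_G^2$ are of at most the same order under the bandwidth window; and the two together yield the parametric-style rate $(\log n/n)^{1/2}$ stated in the second part of the theorem.
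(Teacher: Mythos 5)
Your outline is essentially the right one, but note that the paper itself does not prove this theorem: it is imported verbatim from the cited reference \citet{Mohammadi2021}, whose argument follows exactly the Li--Hsing-style blueprint you describe (closed-form normal equations for the bivariate local linear fit on $\bigtriangleup$, an $O(h^2)$ Taylor bias under the $\mathcal{C}^2$ hypotheses, conditional-variance computations yielding the three components $h_G^4+h_G^3/r+h_G^2/r^2$, then truncation using $\zeta>4$, a Bennett/Bernstein inequality, discretization over a polynomial grid, and Borel--Cantelli). So your proposal is a faithful, if schematic, reconstruction of the same approach rather than an alternative route.
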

\medskip
\noindent The theorem suggests $\mathcal{R}(n)$, appearing in Assumption \ref{discrepancy:Chat C}, to be chosen in the form 
$$\mathcal{R}(n) =  \eta(n) \times \tau(n),$$ 
for $\eta(n)$ slowly increasing to infinity so that $\mathcal{R}(n)\to 0$. For example, $\eta(n) = \log n$ will do in most cases (as the typical convergence rates above are polynomial up to log factors). In the rest of the paper we do so, and we consider the dense sampling scheme leading to $\tau(n) = (\log n/n)^{1/2}$. Notice that Theorem \ref{P.M: C - Chat} covers  examples nowhere differentiable processes like Brownian motion, the Ornstein-Uhlenbeck process, and geometric Brownian motion, which are classical examples falling under $H_{\infty}$.\\

\subsection*{Prior for Algorithm \ref{alg:1}}
Let $\nu$ be a product measure with marginals  $(\nu_1,  \nu_2)$, where
$ \nu_1$ is a { probability} measure defined on the class of  all complete orthonormal systems in $\mathcal{H}$, 
 and $\nu_2$ a finite measure on $\ell^2$ with $\nu_2\left( \ell^1_{+,\mathrm{dec}}\right)>0$. Here,  $\ell^1_{+,\mathrm{dec}}$  denotes the subset of $\ell^2$ including  sequences of non-increasing  summable positive numbers. The measure $\nu_1$ can be chosen arbitrarily. To find an example of $\nu_2$, define $\left\{Z_l\right\}_{l\geq1}$ to be a sequence of independent real random variables with exponential distribution $Z_l \sim  Exp\left( \lambda_l\right)$, $l=1,2,\ldots$, such that $\left\{ \lambda_l \right\}$ forms a summable sequence.  By Kolmogorov's three series theorem, the sequence $\left\{Z_l \right\}$ is (absolutely) summable with probability one, i.e. $\mathbb{P}\left(\sum_l Z_l < \infty\right) = 1$. Define the sequence $\left\{Z_{l:\infty}\right\}$ with $Z_{l:\infty}$ as the $l$-th largest element of the original  sequence $\left\{Z_l\right\}_{l\geq1}$. The sequence $\left\{Z_{l:\infty}\right\}_l$ is a well-defined random sequence since reordering  is a measurable (isometric) mapping from  $\ell^1$ to $\ell^1$. Now, set $\nu_2$ to be the probability measure induced by $\left\{Z_{l:\infty}\right\}_l$. This defines $\nu$, and now we can focus on $\left\{k(j)\right\}$ and $\left\{n(j)\right\}$ satisfying \eqref{spect:summability:nu}. To this end define 
\begin{eqnarray*}
 A_{n(j),k(j)} :=  \left\{ S \in \mathcal{S}^{+}; 
 \;\; \alpha_l(S) \neq 0, \; \forall l  , \; \left( \sum_{l=k(j)+1}^{\infty} \alpha^{2}_{l}(S) \right)^{1/2} \leq 2 \delta_{n(j)}
\right\}.
\end{eqnarray*}
Observe that
\begin{eqnarray*}
\nu\left\{ A_{n(j),k(j)}\right\} &\leq& \nu \left\{
S \in \mathcal{S}^{+}; 
\;\; \alpha_l(S) \neq 0, \; \forall l  , \;   \alpha_{k(j)+1}(S)  \leq 2 \delta_{n(j)}
 \right\}\\
 &=& \nu_2 \left\{ Z_{k(j)+1:\infty} \leq 2 \delta_{n(j)} \right\}\\
 &\leq& \nu_2 \left\{ Z_{k(j)+1:k(j)+1} \leq 2 \delta_{n(j)} \right\}\\
 &\leq&  \mathbb{P} \left\{ Z_{1} \leq 2 \delta_{n(j)} \right\},
\end{eqnarray*}
 where $Z_{k(j)+1:k(j)+1}$ denotes the $(k(j)+1)$-th largest element among the first $k(j)+1$ element of sequence $\{ Z_l\}$.   We  then have
\begin{eqnarray*}
 \nu\left\{ A_{n(j),k(j)}\right\} &\leq& 1-  \mathrm{exp}\left\{ -2 \delta_{n(j)} \lambda_1\right\}\\
 &\leq& 2 \delta_{n(j)} \lambda_1.
\end{eqnarray*}
Choosing $\left\{ n(j) \right\}$  in the form  $n(j) \approx j^{p}$  for some positive number $p$ we obtain
\begin{eqnarray*}
\nu\left\{ A_{n(j),k(j)}\right\} &\leq& 2 \delta_{n(j)} \lambda_1\\
&\lesssim&
\left(\frac{\mathrm{log}j}{j^p}\right)^{1/2}  \left(\mathrm{log}j \right) \\
&\lesssim& j^{-\frac{p}{2}+\varrho },\;\;\; \mathrm{for \; arbitrary \; small\;positive\;}\varrho.
\end{eqnarray*}
This suggests enforcing $p>2$   to ensure summability of $\nu\left\{ A_{n(j),k(j)}\right\}$. It is then clear from the proof of Proposition \ref{spect:n(j),k(j)} no restriction on the sequence $\left\{ k(j) \right\}$ is required, except it being non-decreasing.

\subsection*{Prior for Algorithm \ref{alg:2}}
In this case, we  first define a probability measure $\nu_0$ on $\ell^2$,  the space of all possible coefficient sequences of a general element $S \in \mathcal{S}$ with respect to the CONS $\left\{ \Psi_j \right\}$. Then, we ``enhance" the measure $\nu_0$  to obtain a well-defined finite measure $\nu$ on $\mathcal{S}$ with the desired property that $\nu\left( \mathcal{N}^+ \right) > 0$. To this aim, let   $\left\{ \Psi_j \right\}$  be the CONS obtained by the square { reordering} of sequence $\left\{ \psi_j \otimes \psi_k \right\}_{j,k\geq 1}$, where $\left\{ \psi_j \right\}_{j\geq 1}$ forms a CONS in $\mathcal{H}$. And let   $\left\{ Z_j\right\}_{j \geq 1}$  be a random sequence with distribution $\nu_0$ if and only if
\begin{eqnarray*}
 \left( Z_1,Z_2, \ldots , Z_j \right)' \sim \mathrm{Gaussian} \left(0, \mathrm{diag}\left\{\lambda_1,\lambda_2, \ldots, \lambda_j \right\} \right), \;\;\; j = 1,2,\ldots ,
\end{eqnarray*}
 or equivalently (by square { ordering})
\begin{eqnarray*}
 \left( Z_{1,1},, \ldots , Z_{j,k} \right)' \sim \mathrm{Gaussian} \left(0, \mathrm{diag}\left\{\lambda_{1,1}, \ldots, \lambda_{j,k} \right\} \right), \;\;\; j,k = 1,2,\ldots.
\end{eqnarray*}

Here, $\{\lambda_j\}$ is a sequence of positive non-increasing numbers admitting  some appropriate summability condition such that the  measure $\nu_1$ induced by the following random element
\begin{eqnarray*}
 S \sim \nu_1 \;\;\;\mathrm{where}\;\;\; S = \sum_l Z_l \Psi_j = \sum_{l,k} Z_{l,k} \varphi_l \otimes\varphi_k,
\end{eqnarray*} 
is a well-defined finite measure on $\mathcal{S}$  or even $\mathcal{N}$. Consequently, measure $\nu$ defined as 
\begin{eqnarray*}
 S \sim \nu \;\;\;\mathrm{where}\;\;\; S =  \sum_{l\neq k} Z_{l,k} \varphi_l \otimes\varphi_k  +  \sum_{l} \vert Z_{l,l} \vert \varphi_l \otimes\varphi_l ,
\end{eqnarray*}
satisfies the desired property. We now determine $k(j)$  and $n(j)$  such that the sequence
\begin{eqnarray} \nonumber
\nu \left\{ A_{n(j),k(j)} \right\}:= \nu\left\{ S \in \mathcal{S}\setminus \bigcup\limits_{l=1}^{\infty} W_l: \inf_{w\in W_{k(j)}}\left\Vert S-w\right\Vert_{\mathcal{S}}\leq 2\delta_{n(j)} \right\},\;\;\; j=1,2,\dots,
\end{eqnarray}
 be summable. To this end, consider
\begin{eqnarray*}
\nu \left\{ A_{n(j),k(j)} \right\}&=& \nu\left\{ S \in \mathcal{S}\setminus \bigcup\limits_{l=1}^{\infty} W_l: \left\Vert S-P_{k(j)}(S)\right\Vert_{\mathcal{S}}\leq 2\delta_{n(j)}\right\}\\
&=&\nu\left\{ S \in \mathcal{S}\setminus \bigcup\limits_{l=1}^{\infty} W_l: \sum_{l=k(j)+1}^{\infty}\vert Z_l\vert^2\leq 4\delta^{2}_{n(j)}\right\}
\\
&=& 
\nu\left\{ S \in \mathcal{S}\setminus \bigcup\limits_{l=1}^{\infty} W_l: \sum_{l=k(j)+1}^{\infty} \vert \langle S,\Psi_{l}\rangle_{\mathcal{S}}\vert^2\leq 4\delta^{2}_{n(j)}\right\}
\\
&\leq& \nu\left\{ S \in \mathcal{S}\setminus \bigcup\limits_{i=l}^{\infty} W_l:\sum_{l=k(j)+1}^{k(j)+k} \vert \langle S,\Psi_{l}\rangle_{\mathcal{S}}\vert^2\leq 4\delta^{2}_{n(j)}\right\} \text{, for fixed positive constant } k,
\\
&\leq& \nu\left\{ S \in \mathcal{S}\setminus \bigcup\limits_{i=l}^{\infty} W_l:\sum_{l=k(j)+1}^{k(j)+k} \frac{\vert \langle S,\Psi_{l}\rangle_{\mathcal{S}}\vert^2}{\lambda_l} \leq  \frac{4}{\lambda_{k(j)+k}}\delta^{2}_{n(j)}\right\} 
\\
&=& \frac{1}{\Gamma\left( \frac{k}{2} \right)} 
\gamma\left( \frac{k}{2}, \frac{4}{\lambda_{k(j)+k}}\delta^{2}_{n(j)} \right), 
\end{eqnarray*}
where $\gamma\left( d, \eta \right):= \int_{0}^{\eta} u^{d-1} \mathrm{  exp}(-u) du$. 
Employing the following recursive formula for the incomplete Gamma function 
\begin{eqnarray}\label{recursive:gamma}
 \gamma \left( d, \eta \right ) = - \eta^{d-1}\mathrm{exp}(-\eta)+(d-1)\gamma \left( d-1, \eta \right ),\;\;\;\; d-1>0,
\end{eqnarray}
and setting $k=4$ we obtain 
\begin{eqnarray*}
 \nu \left\{ A_{n(j),k(j)} \right\} &\leq& 
\gamma\left( 2, \frac{4}{\lambda_{k(j)+4}}\delta^{2}_{n(j)} \right)
\\
&=&
- \frac{4}{\lambda_{k(j)+4}}\delta^{2}_{n(j)}\mathrm{exp}\left(- \frac{4}{\lambda_{k(j)+4}}\delta^{2}_{n(j)}\right)+1-\mathrm{exp}\left(- \frac{4}{\lambda_{k(j)+4}}\delta^{2}_{n(j)}\right)
\\
& \leq & 1-\mathrm{exp}\left(- \frac{4}{\lambda_{k(j)+4}}\delta^{2}_{n(j)}\right).
\end{eqnarray*}
Consequently, the desired summambility result  follows from the following condition.
\begin{eqnarray*} 
 \sum_{j=1}^{\infty}\frac{\delta^{2}_{n(j)}}{\lambda_{k(j)+4}}  &<& \infty,
\end{eqnarray*}
which in turn reduces to summability of the sequence  
\begin{eqnarray} \label{practical:summability}
 \left( \frac{\log n(j)}{n(j)}  \right) \left( \log n(j) \right)^2 \left(\frac{1}{\lambda_{k(j)+4}}\right)  ,\;\;\; j=1,2,\dots.
\end{eqnarray}
Setting
$k(j) \approx j^q$,  $n(j) \approx j^{p}$ for some positive numbers $q$ and $p$ and $\lambda_j$ decreasing with   $\lambda_j \approx \frac{1}{j^{2+\rho}}$, relation \eqref{practical:summability} suggests  $p>2q+1$ for sufficiently small positive  $\rho$.
\begin{remark}
Using the recursive formula \eqref{recursive:gamma} entails similar result for even numbers $k$, with $k > 4$. For odd numbers we always are able to use a less restrictive condition by replacing $k$ by even number $k+1$. 
\end{remark}

\bibliography{testing}
\end{document}